\newcommand{\F}{\mathbb{F}}
\newcommand{\G}{\mathbb{G}}
\newcommand{\N}{\mathbb{N}}
\newcommand{\Z}{\mathbb{Z}}
\DeclareMathOperator{\Ext}{Ext}
\DeclareMathOperator{\opH}{H}
\DeclareMathOperator{\Hbul}{\opH^\bullet}
\DeclareMathOperator{\Hom}{Hom}
\DeclareMathOperator{\ind}{ind}
\DeclareMathOperator{\Lie}{Lie}
\newcommand{\g}{\mathfrak{g}}
\newcommand{\fsl}{\mathfrak{sl}}
\newcommand{\gl}{\mathfrak{gl}}
\newcommand{\gln}{\gl_n}
\newcommand{\glnr}{\gln^{(r)}}
\newcommand{\gltr}{\gl_2^{(r)}}
\newtheorem{theorem}{Theorem}[section]
\newtheorem{proposition}[theorem]{Proposition}
\newtheorem{corollary}[theorem]{Corollary}
\newtheorem{lemma}[theorem]{Lemma}
\newtheorem*{theorem*}{Theorem}
\newtheorem*{lemma*}{Lemma}
\theoremstyle{definition}
\newtheorem{remark}[theorem]{Remark}
\numberwithin{equation}{section}
\title{Universal extension classes for \texorpdfstring{$GL_2$}{GL2}}
\author{Christopher M.\ Drupieski}
\address{Department of Mathematical Sciences \\ DePaul University \\ Chicago, IL 60614, USA}
\email{cdrupies@depaul.edu}
\subjclass[2010]{20G10}
\date{\today}
\begin{document}

\begin{abstract}
In this note we give a new existence proof for the universal extension classes for $GL_2$ previously constructed by Friedlander and Suslin via the theory of strict polynomial functors. The key tool in our approach is a calculation of Parker showing that, for suitable choices of coefficient modules, the Lyndon--Hochschild--Serre spectral sequence for $SL_2$ relative to its first Frobenius kernel stabilizes at the $E_2$-page. Consequently, we obtain a new proof that if $G$ is an infinitesimal subgroup scheme of $GL_2$, then the cohomology ring $\Hbul(G,k)$ of $G$ is a finitely-generated noetherian $k$-algebra.
\end{abstract}

\maketitle

\section{Introduction}

In this note we give a new existence proof for the universal extension classes for $GL_2$ previously constructed by Friedlander and Suslin via the theory of strict polynomial functors. These classes have also been exhibited without recourse to strict polynomial functors by van der Kallen \cite[Lemma 4.7]{Kallen:2004}. The key tool in our approach is a calculation of Parker showing that, for suitable choices of coefficient modules, the Lyndon--Hochschild--Serre spectral sequence for $SL_2$ relative to its first Frobenius kernel stabilizes at the $E_2$-page. Consequently, we obtain a new proof that if $G$ is an infinitesimal subgroup scheme of $GL_2$, then the cohomology ring $\Hbul(G,k)$ of $G$ is a finitely-generated noetherian $k$-algebra. Using Parker's recursive formulas for computing higher extensions between modules for $SL_2$, we also obtain the fact that the universal extension classes for $GL_2$ are unique up to scalar multiples (Theorem \ref{theorem:extclassesGL2}); this is a sharper result than the existence statement given by van der Kallen. Most of the notation in this article is standard, and can be found for example in \cite{Jantzen:2003,Parker:2007}.

\section{Preliminaries} \label{section:preliminaries}

Let $G$ be a reduced algebraic group scheme over $k$; we will be primarily interested in the cases $G = GL_2$ and $G = SL_2$. Assume that $G$ is defined over the prime field $\F_p$, and let $F: G \rightarrow G$ be the standard Frobenius morphism defining the $\F_p$-structure on $G$. For each integer $r \geq 1$, write $G_r$ for the $r$-th infinitesimal Frobenius kernel of $G$, that is, the scheme-theoretic kernel of the $r$-th iterate $F^r$ of $F$. Given a rational $G$-module $V$, let $V^* = \Hom_k(V,k)$ denote the dual module, and let $V^{(r)}$ denote the $r$-th Frobenius twist of $V$, that is, the rational $G$-module obtained by twisting the structure map for $V$ by $F^r$. Then the action of $G_r$ on $V^{(r)}$ is trivial. If $V$ and $W$ are finite-dimensional rational $G$-modules, then there exists a natural isomorphism of rational cohomology groups $\Ext_G^\bullet(V,W) \cong \Ext_G^\bullet(W^*,V^*)$.

Next let $G = SL_2$. Let $T \subset SL_2$ be a maximal torus. Then the character group $X(T)$ is generated as a free abelian group by the fundamental dominant weight $\varpi$. Henceforth, we identify $X(T)$ with $\Z$ via $n \varpi \mapsto n$. Under this identification, the set $X(T)_+$ of dominant weights identifies with the subset $\N$ of non-negative integers. Given $n \in \N$, write $L(n)$, $\Delta(n)$, and $\nabla(n)$, respectively, for the irreducible, Weyl, and induced modules for $SL_2$ of highest weight $n$. Then $L(n)$ occurs as the socle of $\nabla(n)$ and as the head of $\Delta(n)$, and $\Delta(n) \cong \nabla(n)^*$. The modules $\Delta(n)$ and $\nabla(n)$ are also known, respectively, as the standard and costandard modules for $SL_2$ of highest weight $n$. Taking $n = 0$, one has the trivial module $k = L(0) = \Delta(0) = \nabla(0)$.

Write $k^2$ for the natural representation of $GL_2$. As an $SL_2$-module, $k^2 \cong (k^2)^*$, and $k^2 \cong L(1) \cong \Delta(1) \cong \nabla(1)$. The tensor product $k^2 \otimes (k^2)^* \cong \Hom_k(k^2,k^2)$ identifies as a $GL_2$-module with $\gl_2$, the adjoint representation of $GL_2$. Evidently, $\gl_2$ is self-dual as an $SL_2$-module. If $p \neq 2$, then $L(2) \cong \Delta(2) \cong \nabla(2) \cong \fsl_2$, the adjoint representation for $SL_2$, and the tensor product $k^2 \otimes (k^2)^* \cong L(1) \otimes L(1)$ is isomorphic as an $SL_2$-module to the direct sum $L(0) \oplus L(2)$. Indeed, if $p \neq 2$, then the inclusion of the trivial module into $\Hom_k(k^2,k^2)$ as the scalar multiples of the identity splits via the trace map. On the other hand, if $p = 2$, then one still has $\Delta(2) \cong \fsl_2$, but $L(1) \otimes L(1)$ is isomorphic to $T(2)$, the indecomposable tilting module for $SL_2$ of highest weight $2$. At any rate, for all primes there exist short exact sequences of $SL_2$-modules
\begin{align}
0 &\rightarrow \nabla(0) \rightarrow L(1) \otimes L(1) \rightarrow \nabla(2) \rightarrow 0, \quad \text{and} \label{eq:goodfiltration} \\
0 &\rightarrow \Delta(2) \rightarrow L(1) \otimes L(1) \rightarrow \Delta(0) \rightarrow 0; \label{eq:Weylfiltration}
\end{align}
cf.\ \cite[\S1]{Doty:2005}. In particular, $\gl_2 \cong L(1) \otimes L(1)$ admits a Weyl filtration with sections $\Delta(2)$ and $\Delta(0)$.

As is customary, we identify the blocks of $SL_2$ with subsets of $\N$ via $L(n) \mapsto n$. Let $\lambda,\mu \in \N$, and write $\lambda = pa+i$ and $\mu = pb+j$, with $a,b \in \N$ and $0 \leq i,j \leq p-1$. If $i = p-1$, then $\mu$ is in the same block as $\lambda$ only if $j = p-1$. If $i \neq p-1$, then $\mu$ is in the same block as $\lambda$ only if either $a - b$ is even and $i = j$, or if $a - b$ is odd and $j = p-2-i$. In particular, if $s \geq 1$, then $\lambda$ lies in the same block as $2p^s$ only if either $a$ is even and $i = 0$, or if $a$ is odd and $i = p-2$. In this paper we apply various recursive formulas developed by Parker \cite{Parker:2007} for computing the higher extension groups between certain classes of rational $SL_2$-modules. In order to simplify the statements of some formulas, we sometimes include more summands than are written in the formulas' original statements. This causes no harm, since the extra summands will correspond to extension groups between modules whose highest weights lie in different blocks for $SL_2$, and hence will be zero.

\section{Existence of universal extension classes for \texorpdfstring{$SL_2$}{SL2}} \label{section:existence}

The first theorem of this section is the analogue for $SL_2$ of \cite[Theorem 1.2]{Friedlander:1997}.

\begin{theorem} \label{theorem:extclassesSL2}
For each $r \geq 1$, there exists a rational cohomology class
\[
e_r \in \Ext_{SL_2}^{2p^{r-1}}(k,\gltr)
\]
that restricts nontrivially to $\Ext_{(SL_2)_1}^{2p^{r-1}}(k,\gltr)$.
\end{theorem}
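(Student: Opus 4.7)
My plan is to analyze the Lyndon--Hochschild--Serre spectral sequence
\[
E_2^{i,j} = \opH^i\!\bigl(SL_2,\, \opH^j((SL_2)_1,k) \otimes \gltr\bigr) \;\Longrightarrow\; \opH^{i+j}(SL_2, \gltr)
\]
associated to $(SL_2)_1 \triangleleft SL_2$; the triviality of $(SL_2)_1$ on the Frobenius-twisted module $\gltr$ justifies the factorization shown on the $E_2$-page. The key input, attributed to Parker and advertised in the introduction, is that this spectral sequence collapses at $E_2$ for the coefficient module $\gltr$. Once this collapse is in hand, the edge map
\[
\opH^n(SL_2, \gltr) \;\twoheadrightarrow\; E_\infty^{0,n} \;=\; E_2^{0,n} \;=\; \bigl(\opH^n((SL_2)_1, k) \otimes \gltr\bigr)^{SL_2}
\]
coincides, after inclusion into the full $\opH^n((SL_2)_1, \gltr)$, with the restriction map to $(SL_2)_1$-cohomology. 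So it suffices to produce a nonzero class in
\[
E_2^{0,\,2p^{r-1}} \;\cong\; \Hom_{SL_2}\!\bigl(\gltr,\; \opH^{2p^{r-1}}((SL_2)_1, k)\bigr),
\]
where I have used the self-duality of $\gl_2$.

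To exhibit this homomorphism, I would first twist \eqref{eq:goodfiltration} by Frobenius to obtain $0 \to k \to \gltr \to \nabla(2)^{(r)} \to 0$, and use $\opH^{>0}(SL_2, k) = 0$ to replace $\gltr$ by $\nabla(2)^{(r)}$ in the Hom target. Then, from the standard description of $\opH^\bullet((SL_2)_1, k)$ via restricted Lie algebra cohomology of $\fsl_2$, the $p^{r-1}$-st power of the canonical degree-two generator is a highest-weight vector of weight $2p^r$ for the Frobenius-shifted $SL_2$-action, and the submodule it generates is a copy of $\nabla(2)^{(r)}$. That copy furnishes the required map, and the spectral sequence then manufactures $e_r$. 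At the prime $p = 2$, where $\gl_2$ does not decompose as $k \oplus \fsl_2$, the same scheme applies with \eqref{eq:Weylfiltration} in place of \eqref{eq:goodfiltration}, at the cost of tracking the class through the associated long exact sequence.

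The main obstacle is the $E_2$-collapse itself, which is where Parker's recursive formulas for $\Ext$-groups of $SL_2$-modules do the heavy lifting; the collapse ultimately reduces to showing that all potential differentials land in vanishing $\Ext$-groups, controllable by the block structure of $SL_2$ recalled in Section~\ref{section:preliminaries}. A subsidiary point is verifying one-dimensionality of the invariant Hom space above, from which the uniqueness-up-to-scalar refinement used later in Theorem~\ref{theorem:extclassesGL2} would descend.
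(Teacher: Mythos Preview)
Your overall strategy coincides with the paper's: both set up the LHS spectral sequence for $(SL_2)_1 \lhd SL_2$ with coefficients $\gltr$, invoke Parker's collapse at $E_2$, observe that the restriction map then surjects onto $E_2^{0,n}$, and finish by exhibiting a nonzero element in $E_2^{0,2p^{r-1}}$.

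The only real divergence is in that last step. The paper does not argue with an explicit highest-weight vector in restricted Lie algebra cohomology; instead it uses the identification (coming from Parker) $E_2^{m,n}\cong\Ext_G^m(\gl_2^{(r-1)},\nabla(n))$, so that $E_2^{0,2p^{r-1}}\cong\Hom_G(\gl_2^{(r-1)},\nabla(2p^{r-1}))$, and then applies $\nabla(2p^{r-1})^{G_{r-1}}\cong\nabla(2)^{(r-1)}$ to untwist down to $\Hom_G(\gl_2,\nabla(2))$, which is one-dimensional by the Weyl filtration of $\gl_2$. This works uniformly in $p$ and already yields the one-dimensionality you mention at the end.

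Your hands-on variant is fine for $p>2$: there $\nabla(2)=L(2)$, so the $p^{r-1}$-st power of a degree-two highest-weight vector indeed generates $L(2p^r)=L(2)^{(r)}=\nabla(2)^{(r)}$. But at $p=2$ the submodule generated by that highest-weight vector is $L(2^{r+1})=L(1)^{(r+1)}$, which is two-dimensional, not $\nabla(2)^{(r)}$; so the map you want from $\nabla(2)^{(r)}$ does not drop out of that construction alone. Your proposed $p=2$ fix (swap \eqref{eq:goodfiltration} for \eqref{eq:Weylfiltration}) addresses the wrong end of the problem---the issue is the module structure of $\opH^{2p^{r-1}}((SL_2)_1,k)$, not the filtration of $\gl_2$. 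The cleanest patch is to take on board Parker's description $\opH^n((SL_2)_1,k)\cong\nabla(n)^{(1)}$ and argue as the paper does, at which point the two proofs become identical.
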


\begin{proof}
Set $G = SL_2$, and consider the Lyndon--Hochschild--Serre (LHS) spectral sequence
\[
E_2^{m,n} = \Ext_{G/G_1}^m(k,\Ext_{G_1}^n(k,\gltr)) \Rightarrow \Ext_G^{m+n}(k,\gltr).
\]
Since $\gl_2$ is self-dual as an $SL_2$-module, the spectral sequence may be rewritten as
\begin{equation} \label{eq:LHSSL2}
E_2^{m,n} = \Ext_{G/G_1}^m(k,\Ext_{G_1}^n(\gltr,k)) \Rightarrow \Ext_G^{m+n}(\gltr,k).
\end{equation}
By \cite[\S 4.1]{Parker:2007} (cf.\ especially the second-to-last paragraph on page 394), this spectral sequence stabilizes at the $E_2$-page. Specifically, taking $i = a = 0$ and $N = (\gl_2)^{(r-1)}$ in the last spectral sequence on page 394 of \cite{Parker:2007} if $p \geq 3$, or in the second spectral sequence on page 395 of \cite{Parker:2007} if $p = 2$, one obtains for $q \geq 0$ the vector space decomposition
\begin{equation} \label{eq:SL2stabilize} \textstyle
\Ext_G^q(\gltr,k) \cong \bigoplus_{n=0}^q E_2^{q-n,n} \cong \bigoplus_{n=0}^q \Ext_G^{q-n}(\gl_2^{(r-1)},\nabla(n)).
\end{equation}

Since \eqref{eq:LHSSL2} stabilizes at the $E_2$-page, one has for each $n \geq 0$ that $E_2^{0,n}$ consists entirely of permanent cycles. On the other hand, by standard properties of the LHS spectral sequence, the vertical edge map
\[
\Ext_G^n(\gltr,k) \rightarrow E_2^{0,n} = \Hom_{G/G_1}(k,\Ext_{G_1}^n(\gltr,k)),
\]
which identifies with the cohomological restriction map from $G$ to $G_1$, has as its image precisely the space of permanent cycles in $E_2^{0,n}$. Thus, one concludes that restriction from $G$ to $G_1$ induces for each $n \geq 0$ a surjective map
\[
\Ext_G^n(\gltr,k) \twoheadrightarrow E_2^{0,n} = \Hom_{G/G_1}(k,\Ext_{G_1}^n(\gltr,k)) \cong \Hom_G(\gl_2^{(r-1)},\nabla(n)).
\]

Now to prove the theorem, it suffices only to show for each $r \geq 1$ that
\[
E_2^{0,2p^{r-1}} \cong \Hom_G(\gl_2^{(r-1)},\nabla(2p^{r-1})) \cong \Hom_{G/G_{r-1}}(\gl_2^{(r-1)},\nabla(2p^{r-1})^{G_{r-1}})
\]
is nonzero. Since $\nabla(n) = \ind_B^G(n)$, where $B$ is the ``negative'' Borel subgroup of $G$ containing $T$, it follows from \cite[I.6.12(a)]{Jantzen:2003} that $\nabla(2p^{r-1})^{G_{r-1}} \cong \nabla(2)^{(r-1)}$ as $G/G_{r-1}$-modules. Then
\[
\Hom_{G/G_{r-1}}(\gl_2^{(r-1)},\nabla(2)^{(r-1)}) \cong \Hom_G(\gl_2,\nabla(2)).
\]
Since $\gl_2$ admits a Weyl filtration, the dimension of this last $\Hom$-space is equal by \cite[II.4.19]{Jantzen:2003} to the multiplicity with which $\Delta(2)$ occurs in such a filtration. Thus, we conclude by the remarks of Section \ref{section:preliminaries} that $\Hom_G(\gl_2,\nabla(2))$ is one-dimensional, and hence that $E_2^{0,2p^{r-1}}$ is nonzero.
\end{proof}

Next we compute the dimension of the cohomology group $\Ext_{SL_2}^{2p^{r-1}}(k,\gltr)$.

\begin{theorem} \label{theorem:onedimext}
Let $r \geq 1$. Then $\Ext_{SL_2}^{2p^{r-1}}(k,\gltr)$ is one-dimensional. In particular, restriction from $SL_2$ to $(SL_2)_1$ defines an isomorphism
\[
\Ext_{SL_2}^{2p^{r-1}}(k,\gltr) \cong \Ext_{(SL_2)_1}^{2p^{r-1}}(k,\gltr)^{SL_2}.
\]
\end{theorem}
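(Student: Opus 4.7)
The plan is to leverage the direct-sum decomposition established in the proof of Theorem~\ref{theorem:extclassesSL2} and show that, in total degree $2p^{r-1}$, exactly one summand contributes. Combining \eqref{eq:SL2stabilize} with the self-duality of $\gl_2$ gives
\[
\Ext_{SL_2}^{2p^{r-1}}(k,\gltr) \cong \bigoplus_{n=0}^{2p^{r-1}} \Ext_{SL_2}^{2p^{r-1}-n}(\gl_2^{(r-1)},\nabla(n)).
\]
The proof of Theorem~\ref{theorem:extclassesSL2} already shows that the $n = 2p^{r-1}$ summand is one-dimensional, so it suffices to prove the vanishing statement $\Ext_{SL_2}^{m}(\gl_2^{(r-1)},\nabla(n)) = 0$ for every pair $(m,n)$ with $m + n = 2p^{r-1}$, $m \geq 1$, and $0 \leq n < 2p^{r-1}$.

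To achieve this vanishing, I would Frobenius-twist the short exact sequence \eqref{eq:goodfiltration} to obtain
\[
0 \rightarrow k \rightarrow \gl_2^{(r-1)} \rightarrow \nabla(2)^{(r-1)} \rightarrow 0,
\]
and apply the long exact sequence in $\Ext_{SL_2}^{\bullet}(-,\nabla(n))$. Kempf's vanishing theorem kills the $\Ext_{SL_2}^i(k,\nabla(n))$ terms in positive degree, reducing the problem to verifying that $\Ext_{SL_2}^{m}(\nabla(2)^{(r-1)},\nabla(n)) = 0$ whenever $m + n = 2p^{r-1}$ and $m \geq 1$. Since every weight of $\nabla(2)^{(r-1)}$ is divisible by $p^{r-1}$, the block analysis at the end of Section~\ref{section:preliminaries} rules out most values of $n$ outright; for the comparatively few surviving pairs, I would apply one of Parker's recursive $\Ext$-formulas from \cite{Parker:2007} to compute each extension group and verify the vanishing.

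Once the dimension is confirmed to be one, the restriction claim comes essentially for free: the proof of Theorem~\ref{theorem:extclassesSL2} exhibits a surjection from $\Ext_{SL_2}^{2p^{r-1}}(k,\gltr)$ onto the one-dimensional space $E_2^{0,2p^{r-1}}$, which under self-duality is identified with $\Ext_{(SL_2)_1}^{2p^{r-1}}(k,\gltr)^{SL_2}$; a surjection between one-dimensional spaces is necessarily an isomorphism.

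I expect the main technical difficulty to be the case-by-case vanishing of $\Ext_{SL_2}^{m}(\nabla(2)^{(r-1)},\nabla(n))$ for the $(m,n)$ pairs that survive the block analysis, particularly in characteristic $p = 2$, where $\nabla(2)$ fails to be simple and the block containing $2p^{r-1}$ is substantially larger, forcing the Parker recursion to do more bookkeeping.
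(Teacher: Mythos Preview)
Your plan is essentially the paper's own approach: use the $E_2$-decomposition from Theorem~\ref{theorem:extclassesSL2}, reduce via the twisted short exact sequence coming from~\eqref{eq:goodfiltration}, and finish with Parker's recursion. Two execution choices in the paper are worth noting. First, instead of working with $\Ext_{SL_2}^m(\nabla(2)^{(r-1)},\nabla(n))$, the paper dualizes once more to land on $\Ext_{SL_2}^m(\Delta(n),\nabla(2)^{(r-1)})$, which is precisely the shape handled by Parker's Theorem~6.1 and works uniformly for all primes; your anticipated $p=2$ difficulty (where $\nabla(2)\neq\Delta(2)$) then evaporates. Second, the paper does not check the surviving block-compatible $n$ one at a time: it shows that throughout the recursion every nonzero term $\Ext_G^a(\Delta(b),\nabla(2)^{(s)})$ must satisfy $a+b\geq 2p^s$, and this single invariant forces $\Hom_G(\Delta(2),\nabla(2))$ to be the unique nonzero endpoint.
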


\begin{proof}
Set $G = SL_2$ and $q = 2p^{r-1}$. We saw in the proof of Theorem \ref{theorem:extclassesSL2} that
\begin{equation} \label{eq:sawalready}
\begin{split}
\Ext_G^q(k,\gltr) &\cong \textstyle \bigoplus_{n=0}^q \Ext_G^{q-n}(\gl_2^{(r-1)},\nabla(n)) \\
&\cong \textstyle \bigoplus_{n=0}^q \Ext_G^{q-n}(\Delta(n),\gl_2^{(r-1)}),
\end{split}
\end{equation}
and that the $n=q$ summand in this direct sum identifies with $\Ext_{G_1}^q(k,\gltr)^G$ and is one-dimen\-sional. Then to prove the theorem it suffices to show that the other summands in \eqref{eq:sawalready} are zero.

Twisting a module by an interate of the Frobenius morphism is an exact functor, so applying the Frobenius twist $(-)^{(r)}$ to \eqref{eq:goodfiltration}, one obtains the new short exact sequence
\[
0 \rightarrow k \rightarrow \gltr \rightarrow \nabla(2)^{(r)} \rightarrow 0.
\]
One obtains a similar short exact sequence upon applying $(-)^{(r-1)}$ to \eqref{eq:goodfiltration}. Then considering the associated long exact sequences in cohomology, and applying the fact that all higher extensions of induced modules by Weyl modules split \cite[II.4.13]{Jantzen:2003}, it follows that
\begin{equation} \label{eq:Parker4.3} \textstyle
\Ext_G^q(k,\gltr) \cong \Ext_G^q(k,\nabla(2)^{(r)}) \cong \bigoplus_{n=0}^q \Ext_G^{q-n}(\Delta(n),\nabla(2)^{(r-1)}).
\end{equation}
Now if $r = 1$, the summands with $n \neq q$ are all zero by \cite[II.4.13]{Jantzen:2003}, so assume for the remainder of the proof that $r \geq 2$.

Write $[\cdot]$ for the greatest integer function. Then \cite[Theorem 6.1]{Parker:2007} asserts that if $\Delta(n)$ lies in the same block as $\nabla(2)^{(r-1)}$, then
\begin{equation} \label{eq:Parker6.1}
\Ext_G^{q-n}(\Delta(n),\nabla(2)^{(r-1)}) \cong \textstyle \bigoplus_{i=0}^{q-n} \Ext_G^{q-n-i}(\Delta([n/p]+i),\nabla(2)^{(r-2)}).
\end{equation}
Applying this formula recursively, it follows that $\Ext_G^q(k,\nabla(2)^{(r)})$ can be rewritten as a direct sum of various extension groups $\Ext_G^a(\Delta(b),\nabla(2))$ with $a,b \in \N$. Moreover, such an extension group is nonzero only if $a = 0$ and $b = 2$, and when nonzero it is exactly one-dimensional \cite[II.4.13]{Jantzen:2003}.

Let us call the right-hand side of \eqref{eq:Parker4.3} the first step in the recursion to compute $\Ext_G^q(k,\nabla(2)^{(r)})$. Then for $m \geq 1$, the $(m+1)$-th step in the recursion is obtained by applying \cite[Theorem~6.1]{Parker:2007} to all of the nonzero terms from the $m$-th step in the recursion. Thus, $\dim_k \Ext_G^q(k,\nabla(2)^{(r)})$ is equal to the number of times that $\Hom_G(\Delta(2),\nabla(2))$ occurs in the $r$-th step of the recursion. For example, starting with the expression $\Hom_G(\Delta(2p^{r-1}),\nabla(2)^{(r-1)})$, the subsequent steps of the recursion yield the $\Hom$-groups $\Hom_G(\Delta(2p^{r-s}),\nabla(2)^{(r-s)})$ for $2 \leq s \leq r$, resulting finally in a single copy of $\Hom_G(\Delta(2),\nabla(2))$. We claim that this is the only copy of $\Hom_G(\Delta(2),\nabla(2))$ that occurs in the $r$-th step of the recursion.

First, suppose that in some step of the recursion there occurs a nonzero extension group of the form $\Ext_G^a(\Delta(b),\nabla(2)^{(s)})$ with $a+b < 2p^s$ (so by \eqref{eq:Parker4.3}, this expression occurs in the second step of the recursion or beyond). Then from the recursion formula, and recalling the discussion for when a weight can be in the same block as $2p^{s+1}$, it follows that there exists $i \in \N$ such that in the previous step of the recursion, there is a nonzero extension group of the form
\begin{align*}
&\Ext_G^{a+i}(\Delta(p(b-i)),\nabla(2)^{(s+1)}) & \text{if $b-i$ is even, or} \\
&\Ext_G^{a+i}(\Delta(p(b-i)+(p-2)),\nabla(2)^{(s+1)}) & \text{if $b-i$ is odd.}
\end{align*}
Now
\begin{align*}
(a+i)+p(b-i)+p-2 &= (a+b)+(p-1)(b+1)-(p-1)i-1 \\
&< 2p^s +(p-1)(b+1) \\
&\leq 2p^s+(p-1)(2p^s) =2p^{s+1}.
\end{align*}
This shows that, whether $b-i$ is even or odd, the immediate precursor in the recursion to the extension group $\Ext_G^a(\Delta(b),\nabla(2)^{(s)})$ is an extension group of the form $\Ext_G^c(\Delta(d),\nabla(2)^{(s+1)})$ with the property $c+d < 2p^{s+1}$. This is a contradiction, because, working backwards to the first step in the recursion, each extension group $\Ext_G^{q-n}(\Delta(n),\nabla(2)^{(r-1)})$ in the right-hand side of \eqref{eq:Parker4.3} satisfies the property $(q-n)+n = 2p^{r-1}$. Thus, we conclude that at no point in the recursion can there be an extension group of the form $\Ext_G^a(\Delta(b),\nabla(2)^{(s)})$ with $a+b < 2p^s$.

Now consider the possible precursors in the recursion to $\Hom_G(\Delta(2),\nabla(2))$. From the recursion formula, and from the conditions for a weight to be in the same block as $2p$, the only possible immediate precursors to $\Hom_G(\Delta(2),\nabla(2))$ are $\Hom_G(\Delta(2p),\nabla(2)^{(1)})$, $\Ext_G^1(\Delta(p+(p-2)),\nabla(2)^{(1)})$, and $\Ext_G^2(\Delta(0),\nabla(2)^{(1)})$, though the discussion of the previous paragraph shows that the latter two possibilities cannot occur. Similarly, given $s \geq 1$, the possible immediate precursors to $\Hom_G(\Delta(2p^s),\nabla(2)^{(s)})$ are of the form
\begin{align*}
&\Ext_G^i(\Delta(p(2p^s-i)),\nabla(2)^{(s+1)}) & \text{if $2p^s-i$ is even, or} \\
&\Ext_G^i(\Delta(p(2p^s-i)+(p-2)),\nabla(2)^{(s+1)}) & \text{if $2p^s-i$ is odd,}
\end{align*}
for some $i \in \N$. If $i \geq 1$, then
\begin{align*}
i+p(2p^s-i)+(p-2) &= 2p^{s+1} - (p-1)i+(p-2) \\
&\leq 2p^{s+1}-(p-1)+(p-2) \\
&= 2p^{s+1}-1 < 2p^{s+1}.
\end{align*}
Thus, the previous paragraph shows that the only possible immediate precursor in the recursion to $\Hom_G(\Delta(2p^s),\nabla(2)^{(s)})$ is $\Hom_G(\Delta(2p^{s+1}),\nabla(2)^{(s+1)})$, and from this it follows that the only nonzero expressions occurring in the recursion are those already given above, namely, $\Hom_G(\Delta(2p^{r-s}),\nabla(2)^{(r-s)})$ for $0 \leq s \leq r$. This proves the claim, and hence establishes that the only nonzero summand in \eqref{eq:sawalready} is the summand with $n = q$.
\end{proof}

As a corollary of the proof, we have:

\begin{corollary} \label{corollary:extvanishing}
Let $r \geq 1$, and let $0 \leq n < 2p^r$. Then
\[
\Ext_{SL_2}^{2p^r-n}(\Delta(n),\gltr) = \Ext_{SL_2}^{2p^r - n}(\Delta(n),\nabla(2)^{(r)}) = 0.
\]
\end{corollary}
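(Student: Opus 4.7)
First, I would note that by applying the decompositions \eqref{eq:sawalready} and \eqref{eq:Parker4.3} with $r$ replaced by $r+1$ (so that $q = 2p^r$), one obtains
\[
\Ext_G^{2p^r}(k,\gl_2^{(r+1)}) \cong \bigoplus_{n=0}^{2p^r} \Ext_G^{2p^r-n}(\Delta(n),\gltr)
\]
and
\[
\Ext_G^{2p^r}(k,\gl_2^{(r+1)}) \cong \Ext_G^{2p^r}(k,\nabla(2)^{(r+1)}) \cong \bigoplus_{n=0}^{2p^r} \Ext_G^{2p^r-n}(\Delta(n),\nabla(2)^{(r)}),
\]
where $G = SL_2$. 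The plan is then to invoke the recursive argument carried out in the proof of Theorem \ref{theorem:onedimext}, which establishes that the only nonzero summand on the right-hand side of each decomposition is the one indexed by $n = 2p^r$. Both vanishing statements of the corollary follow at once from discarding the summands with $0 \leq n < 2p^r$.

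I expect no serious obstacle. The argument in the proof of Theorem \ref{theorem:onedimext} is formulated uniformly in $r$, so its transplantation to the shifted setting is automatic; what matters is that the recursion there already delivers the vanishing of each individual summand, not merely the vanishing of their sum. If I wanted to minimize the input, I could additionally observe that the first equality of the corollary is a direct consequence of the long exact sequence associated to
\[
0 \to k \to \gltr \to \nabla(2)^{(r)} \to 0
\]
together with the vanishing $\Ext_G^i(\Delta(n),k) = 0$ for $i > 0$ provided by \cite[II.4.13]{Jantzen:2003}; so in fact only the vanishing of $\Ext_G^{2p^r-n}(\Delta(n),\nabla(2)^{(r)})$ carries independent content, and that is precisely what the recursion produces.
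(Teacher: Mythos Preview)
Your proposal is correct and is precisely the paper's approach: the corollary is stated in the paper without a separate proof, simply as ``a corollary of the proof'' of Theorem~\ref{theorem:onedimext}, and your reindexing $r \mapsto r+1$ is exactly the observation needed to extract it. Your supplementary remark that the $\gltr$ vanishing reduces to the $\nabla(2)^{(r)}$ vanishing via the long exact sequence and \cite[II.4.13]{Jantzen:2003} is also correct and mirrors the passage from \eqref{eq:sawalready} to \eqref{eq:Parker4.3} in the paper's argument.
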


\section{Generalization to \texorpdfstring{$GL_2$}{GL2}}

In this section we show that analogues of Theorems \ref{theorem:extclassesSL2} and \ref{theorem:onedimext} hold with $SL_2$ replaced by $GL_2$. This gives a new existence proof for the universal extension classes for $GL_2$ first constructed by Friedlander and Suslin via the theory of strict polynomial functors. First we require a general result about restriction from $GL_n$ to $SL_n$:

\begin{proposition} \label{proposition:resGLtoSL}
Let $n \geq 2$, and let $V$ and $W$ be rational $GL_n$-modules, considered also as rational $SL_n$-modules by restriction. Suppose that the center of $GL_n$ acts trivially on both $V$ and $W$. Then restriction from $GL_n$ to $SL_n$ defines an isomorphism of rational cohomology groups
\[
\Ext_{GL_n}^\bullet(V,W) \cong \Ext_{SL_n}^\bullet(V,W).
\]
\end{proposition}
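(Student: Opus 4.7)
My plan is to invoke the Lyndon--Hochschild--Serre spectral sequence associated to the short exact sequence $1 \to SL_n \to GL_n \xrightarrow{\det} \G_m \to 1$ of algebraic groups, namely
\[
E_2^{p,q} = \opH^p(\G_m, \Ext_{SL_n}^q(V,W)) \Rightarrow \Ext_{GL_n}^{p+q}(V,W),
\]
where the $\G_m = GL_n/SL_n$-action on the Ext-group is the one induced functorially from the $GL_n$-module structures on $V$ and $W$. Every rational module for a torus decomposes as a direct sum of its weight spaces, so $\opH^p(\G_m, -)$ vanishes for $p \geq 1$ while $\opH^0(\G_m, -)$ picks out the weight-zero submodule. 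The spectral sequence therefore collapses at $E_2$ and yields
\[
\Ext_{GL_n}^\bullet(V,W) \cong \Ext_{SL_n}^\bullet(V,W)^{\G_m}.
\]
The remaining task is to verify that the $\G_m$-action on the right-hand side is trivial.

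For this I would factor through the center. Let $Z \cong \G_m$ denote the subgroup scheme of scalar matrices in $GL_n$. The composition $Z \hookrightarrow GL_n \xrightarrow{\det} \G_m = GL_n/SL_n$ is the $n$-th power endomorphism $t \mapsto t^n$ of $\G_m$, which is a surjection of algebraic groups. By hypothesis $Z$ acts trivially on $V$ and $W$, and since the $Z$-action on $\Ext_{SL_n}^\bullet(V,W)$ is induced functorially from its action on $V$ and $W$, the center also acts trivially on $\Ext_{SL_n}^\bullet(V,W)$. Translating to weights: every $GL_n/SL_n$-weight $\lambda$ occurring in $\Ext_{SL_n}^\bullet(V,W)$ pulls back along $t \mapsto t^n$ to the $Z$-weight $n \lambda$, which must equal $0$; hence $\lambda = 0$. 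The $\G_m$-invariants on the right-hand side therefore absorb the entire Ext-group, and the desired isomorphism follows.

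The argument is essentially formal once the LHS spectral sequence is set up (see \cite[I.6.6]{Jantzen:2003}), so I do not anticipate a serious obstacle. The only point requiring mild care is to confirm that the induced $GL_n$-action on $\Ext_{SL_n}^\bullet(V,W)$ that appears in the LHS spectral sequence restricts, on the center $Z$, to the action coming from the $Z$-module structures on $V$ and $W$; this is immediate from functoriality, so the reduction to the weight computation goes through without difficulty.
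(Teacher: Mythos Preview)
Your argument is correct and follows the same overall strategy as the paper: collapse the Lyndon--Hochschild--Serre spectral sequence for $SL_n \trianglelefteq GL_n$ using that $GL_n/SL_n \cong \G_m$ is diagonalizable, and then show that the $\G_m$-action on $\Ext_{SL_n}^\bullet(V,W)$ is trivial by exploiting the center $Z$. The only difference lies in how that last step is carried out. The paper base-changes to an algebraic closure, invokes density of $GL_n(k)$, and uses the existence of $n$-th roots to write $GL_n(k) = SL_n(k)\cdot Z(k)$. You instead observe that the composite $Z \hookrightarrow GL_n \twoheadrightarrow GL_n/SL_n$ is the $n$-th power map on $\G_m$, so any $\G_m$-weight $\lambda$ occurring in the Ext group pulls back to the $Z$-weight $n\lambda$, which must vanish since $Z$ acts trivially; hence $\lambda = 0$. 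Your version is a little cleaner in that it avoids the scalar-extension and density arguments and works uniformly over any base field. One small point worth making explicit: the reason the $Z$-action on $\Ext_{SL_n}^\bullet(V,W)$ is determined solely by the $Z$-actions on $V$ and $W$ is not just ``functoriality'' but the fact that conjugation by $Z$ on $SL_n$ is trivial (since $Z$ is central); the paper spells this out, and it is the crux of why the center is the right subgroup to use.
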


\begin{proof}
Given $V$ and $W$, consider the associated LHS spectral sequence
\[
E_2^{i,j} = \Ext_{GL_n/SL_n}^i(k,\Ext_{SL_n}^j(V,W)) \Rightarrow \Ext_{GL_n}^{i+j}(V,W).
\]
The quotient group $GL_n/SL_n$ is isomorphic to the multiplicative group $\G_m$, a diagonalizable group scheme. Then by \cite[I.4.3]{Jantzen:2003}, one has $E_2^{i,j} = 0$ for all $i > 0$, so the spectral sequence collapses to the first column, and it follows that restriction from $GL_n$ to $SL_n$ induces an isomorphism
\[
\Ext_{GL_n}^\bullet(V,W) \cong \Ext_{SL_n}^\bullet(V,W)^{GL_n/SL_n}.
\]
Then to finish the proof, it suffices to show that the action of $GL_n$ on $\Ext_{SL_n}^\bullet(V,W)$ is trivial. Since taking fixed points commutes with scalar extension \cite[I.2.10(3)]{Jantzen:2003}, we may assume for the rest of the proof that the field $k$ is algebraically closed. In this case $GL_n(k)$ is dense in $GL_n$ \cite[I.6.16]{Jantzen:2003}, so it suffices further to show that the action of $GL_n(k)$ on $\Ext_{SL_n}^\bullet(V,W)$ is trivial.

Write $Z$ for the center of $GL_n$. Recall that the action of $GL_n$ on $\Ext_{SL_n}^\bullet(V,W)$ is induced by the conjugation action of $GL_n$ on $SL_n$ together with the given actions of $GL_n$ on $V$ and $W$ \cite[I.6.7]{Jantzen:2003}. Also, $SL_n$ acts trivially on $\Ext_{SL_n}^\bullet(V,W)$ for any pair of rational modules $V$ and $W$. Since the conjugation action of $Z$ on $SL_n$ is trivial, and since $Z$ acts trivially on $V$ and $W$ by assumption, the induced action of $Z$ on $\Ext_{SL_n}^\bullet(V,W)$ is trivial. But, using the fact that $k$ contains arbitrary $n$-th roots, $GL_n(k)$ is generated as an abstract group by $Z(k)$ and $SL_n(k)$. Specifically, if $A \in GL_n(k)$ and if $a \in k$ is any fixed $n$-th root of $\det(A)$, then $A = (A \cdot a^{-1} I_n) \cdot (aI_n) \in SL_n(k) \cdot Z(k)$, where $I_n$ denotes the $n \times n$ identity matrix. Thus, we conclude that $GL_n(k)$ acts trivially on $\Ext_{SL_n}^\bullet(V,W)$, and hence so does the full group scheme $GL_n$.
\end{proof}

\begin{remark} \label{remark:resGLn1toSLn1}
Since $(GL_n)_1/(SL_n)_1 \cong (\G_m)_1$ is a diagonalizable group scheme, an argument completely analogous to that in the above proof shows that if $V$ and $W$ are rational $(GL_n)_1$-modules, then restriction from $(GL_n)_1$ to $(SL_n)_1$ defines an isomorphism
\[
\Ext_{(GL_n)_1}^\bullet(V,W) \cong \Ext_{(SL_n)_1}^\bullet(V,W)^{(GL_n)_1/(SL_n)_1}.
\]
In particular, restriction from $(GL_n)_1$ to $(SL_n)_1$ is injective.
\end{remark}

\begin{remark}
It is not true in general that if $V$ and $W$ are rational $GL_n$-modules that restriction from $GL_n$ to $SL_n$ defines an isomorphism of rational cohomology groups. Indeed, choose rational $GL_n$-modules $V$ and $W$ such that $\Ext_{SL_n}^\bullet(V,W)$ is nonzero, and write $\det$ for the $1$-dimensional determinant representation of $GL_n$. Then $W \otimes \det \cong W$ as rational $SL_n$-modules, and
\begin{equation} \label{eq:tensordet}
\Ext_{SL_n}^\bullet(V,W \otimes \det) \cong \Ext_{SL_n}^\bullet(V,W) \otimes \det
\end{equation}
as rational $GL_n$-modules. The action of $GL_n$ on the cohomology spaces in \eqref{eq:tensordet} factors through the quotient $GL_n/SL_n \cong \G_m$. Now $GL_n$ will act nontrivially on at least one of the cohomology spaces $\Ext_{SL_n}^\bullet(V,W)$ and $\Ext_{SL_n}^\bullet(V,W) \otimes \det$, so either $\Ext_{GL_n}^\bullet(V,W) \not\cong \Ext_{SL_n}^\bullet(V,W)$ or $\Ext_{GL_n}^\bullet(V,W \otimes \det) \not\cong \Ext_{SL_n}^\bullet(V,W \otimes \det)$, because the action of $GL_n$ on $\Ext_{GL_n}^\bullet(V,W)$ and $\Ext_{GL_n}^\bullet(V,W \otimes \det)$ is trivial.
\end{remark}

The center of $GL_n$ acts trivially on the adjoint representation, hence also on any Frobenius twist of it. Then a special case of Proposition \ref{proposition:resGLtoSL} is:

\begin{corollary} \label{corollary:gltrresiso}
Let $r \geq 1$. Then restriction from $GL_n$ to $SL_n$ defines an isomorphism
\[
\Ext_{GL_n}^\bullet(k,\glnr) \cong \Ext_{SL_n}^\bullet(k,\glnr).
\]
\end{corollary}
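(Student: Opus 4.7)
The plan is to verify that the hypotheses of Proposition~\ref{proposition:resGLtoSL} are satisfied for $V = k$ and $W = \glnr$, and then simply to invoke that proposition. There are two triviality checks to make, one for each module.

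For $V = k$ the trivial module, every subgroup scheme of $GL_n$ acts trivially, so in particular so does the center $Z$. For $W = \glnr = \gln^{(r)}$, I would recall that $\gln = \Lie(GL_n)$ with $GL_n$ acting by conjugation, so that the center $Z \subset GL_n$, consisting of scalar matrices that commute with every element of $GL_n$, must act trivially. Since the Frobenius morphism $F^r$ carries $Z$ into $Z$, precomposing the trivial $Z$-action on $\gln$ by $F^r$ still yields the trivial action, and hence $Z$ acts trivially on $\glnr$ as well.

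With both hypotheses of the proposition in hand, I would conclude immediately that restriction induces the claimed isomorphism $\Ext_{GL_n}^\bullet(k,\glnr) \cong \Ext_{SL_n}^\bullet(k,\glnr)$. There is no real obstacle here: the corollary is genuinely just a special case of Proposition~\ref{proposition:resGLtoSL}, and the argument is little more than the sentence already appearing before its statement.
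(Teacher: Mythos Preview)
Your proposal is correct and follows exactly the paper's approach: the corollary is stated as an immediate special case of Proposition~\ref{proposition:resGLtoSL}, with the only content being the observation (made in the sentence preceding the corollary) that the center of $GL_n$ acts trivially on the adjoint representation and hence on any Frobenius twist of it.
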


We now obtain the main theorem of this section:

\begin{theorem} \label{theorem:extclassesGL2}
Let $r \geq 1$. Then $\Ext_{GL_2}^{2p^{r-1}}(k,\gltr)$ is one-dimensional, and restriction from $GL_2$ to $(GL_2)_1$ defines an isomorphism
\[
\Ext_{GL_2}^{2p^{r-1}}(k,\gltr) \cong \Ext_{(GL_2)_1}^{2p^{r-1}}(k,\gltr)^{GL_2}.
\]
In particular, any nonzero class $e_r \in \Ext_{GL_2}^{2p^{r-1}}(k,\gltr)$ restricts nontrivially to $\Ext_{(GL_2)_1}^{2p^{r-1}}(k,\gltr)$.
\end{theorem}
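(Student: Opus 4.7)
The plan is to assemble Theorem \ref{theorem:onedimext} (the $SL_2$-version), Corollary \ref{corollary:gltrresiso} (which identifies $GL_2$-cohomology with $SL_2$-cohomology in this setting), and Remark \ref{remark:resGLn1toSLn1} (injectivity of restriction from $(GL_2)_1$ to $(SL_2)_1$). The one-dimensionality of $\Ext_{GL_2}^{2p^{r-1}}(k,\gltr)$ is then immediate from the first two: Corollary \ref{corollary:gltrresiso} gives $\Ext_{GL_2}^{2p^{r-1}}(k,\gltr) \cong \Ext_{SL_2}^{2p^{r-1}}(k,\gltr)$, and the right-hand side is one-dimensional by Theorem \ref{theorem:onedimext}.

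For the restriction isomorphism, set $q = 2p^{r-1}$ and consider the commutative square whose rows are the cohomological restriction maps from the ambient group to its first Frobenius kernel, and whose columns are restriction from the $GL$-version to the $SL$-version; that is, the top row is $\Ext_{GL_2}^q(k,\gltr) \to \Ext_{(GL_2)_1}^q(k,\gltr)$ and the bottom row is $\Ext_{SL_2}^q(k,\gltr) \to \Ext_{(SL_2)_1}^q(k,\gltr)$. The left column is an isomorphism by Corollary \ref{corollary:gltrresiso}, the right column is injective by Remark \ref{remark:resGLn1toSLn1}, and the bottom row is, by Theorem \ref{theorem:onedimext}, an isomorphism onto the one-dimensional space $\Ext_{(SL_2)_1}^q(k,\gltr)^{SL_2}$. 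A quick diagram chase then shows that the top restriction map is injective, and by standard edge-map considerations for the LHS spectral sequence its image lies in $\Ext_{(GL_2)_1}^q(k,\gltr)^{GL_2}$.

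It remains only to bound $\dim_k \Ext_{(GL_2)_1}^q(k,\gltr)^{GL_2}$ above by $1$. Here I would use the fact that the restriction embedding of Remark \ref{remark:resGLn1toSLn1} is $GL_2$-equivariant (both sides carry compatible $GL_2$-actions induced by conjugation on $(GL_2)_1$ together with the given $GL_2$-module structures on $k$ and $\gltr$). Taking $GL_2$-invariants in that injection and using $SL_2 \subseteq GL_2$, one obtains an embedding $\Ext_{(GL_2)_1}^q(k,\gltr)^{GL_2} \hookrightarrow \Ext_{(SL_2)_1}^q(k,\gltr)^{SL_2}$ into a one-dimensional space. Combined with the already-established injection from the one-dimensional $\Ext_{GL_2}^q(k,\gltr)$, this forces the top restriction map to be an isomorphism onto $\Ext_{(GL_2)_1}^q(k,\gltr)^{GL_2}$, and the final \emph{in particular} clause follows at once.

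I do not anticipate any serious obstruction: the argument is essentially a diagram chase combining three previously-proven statements. The only mildly delicate point is the $GL_2$-equivariance needed to pass invariants through the embedding $\Ext_{(GL_2)_1}^q(k,\gltr) \hookrightarrow \Ext_{(SL_2)_1}^q(k,\gltr)$, but this is routine from the discussion of conjugation actions on rational cohomology in \cite{Jantzen:2003}.
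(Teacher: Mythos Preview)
Your proposal is correct and follows essentially the same approach as the paper: both assemble the same commutative square of restriction maps and invoke the same three ingredients (Theorem \ref{theorem:onedimext}, Corollary \ref{corollary:gltrresiso}, and Remark \ref{remark:resGLn1toSLn1}). The only cosmetic difference is that the paper argues directly that the bottom map $\gamma$ of the square is an isomorphism (by using that $\beta \circ \alpha$ is $GL_2$-equivariant to identify $SL_2$- and $GL_2$-invariants in the target), whereas you first establish injectivity of the top map and then bound the dimension of its target; the content is the same.
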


\begin{proof}
Consider the following commutative diagram of restriction homomorphisms:
\[
\xymatrix{
\Ext_{GL_2}^{2p^{r-1}}(k,\gltr) \ar@{->}[d]^{\delta} \ar@{->}[r]^{\alpha} & \Ext_{SL_2}^{2p^{r-1}}(k,\gltr) \ar@{->}[d]^{\beta} \\
\Ext_{(GL_2)_1}^{2p^{r-1}}(k,\gltr)^{GL_2} \ar@{->}[r]^{\gamma} & \Ext_{(SL_2)_1}^{2p^{r-1}}(k,\gltr)^{SL_2}.
}
\]
We know that $\alpha$ and $\beta$ are isomorphisms by Corollary \ref{corollary:gltrresiso} and Theorem \ref{theorem:onedimext}, and that $\gamma$ is an injection by Remark \ref{remark:resGLn1toSLn1}. Since $\beta \circ \alpha$ is a $GL_2$-equivariant isomorphism, we conclude that
\begin{align*}
\Ext_{(SL_2)_1}^{2p^{r-1}}(k,\gltr)^{SL_2} &= \Ext_{(SL_2)_1}^{2p^{r-1}}(k,\gltr)^{GL_2} \\
&= ( \Ext_{(SL_2)_1}^{2p^{r-1}}(k,\gltr)^{(GL_2)_1} )^{GL_2/(GL_2)_1} \\
&\cong \Ext_{(GL_2)_1}^{2p^{r-1}}(k,\gltr)^{GL_2},
\end{align*}
where the last isomorphism follows from Remark \ref{remark:resGLn1toSLn1}. Thus, we conclude that $\gamma$ is an isomorphism, from which it follows that $\delta$ is an isomorphism as well. This proves the second and third claims of the theorem, while the first follows from the isomorphism $\alpha$ and Theorem \ref{theorem:onedimext}.
\end{proof}

\section{Cohomological finite-generation}

In this section we describe how Theorem \ref{theorem:extclassesGL2} can be applied to show that if $G$ is an infinitesimal subgroup scheme of $GL_2$, then the cohomology ring $\Hbul(G,k)$ is a finitely-generated $k$-algebra. The arguments establishing these facts are essentially the same as those given by Friedlander and Suslin in \cite[\S 1]{Friedlander:1997}, so we will omit most of the details. The one case where slightly modified arguments are required is the case $p = 2$. This is because when working with infinitesimal subgroups of $GL_n$, Friedlander and Suslin specifically assume $n \not\equiv 0 \mod p$. To deal with the case $n \equiv 0 \mod p$, they instead embed their given infinitesimal group $G$ into a larger general linear group where their general arguments go through. As we have not exhibited universal extension classes for larger general linear groups, this workaround is not available to us and we must argue more directly.

Before describing the necessary modifications to handle the case $p = n = 2$, we give some notation in order to state the main theorem. For each $i \geq 1$, let $e_i \in \opH^{2p^{i-1}}(GL_2,\gl_2^{(i)})$ be a fixed nonzero cohomology class as in Theorem \ref{theorem:extclassesGL2}. Then as in \cite[Remark 1.2.2]{Friedlander:1997}, we can for each $j \geq 1$ pull back $e_i$ along the $j$-th iterate of the Frobenius morphism to obtain a new cohomology class $e_i^{(j)} \in \opH^{2p^{i-1}}(GL_2,\gl_2^{(i+j)})$. Now let $r \geq 1$, and let $G \subset (GL_2)_r$ be an infinitesimal subgroup scheme. Then the restriction of $\gltr$ to $G$ is trivial, so
\[
\Hbul(G,\gltr) \cong \Hbul(G,k) \otimes \gltr \cong \Hom_k((\gltr)^*,\Hbul(G,k)).
\]
Thus, the restriction of $e_i^{(r-i)}$ to $G$ determines a linear map
\begin{equation} \label{eq:linearmap}
(\gltr)^* \rightarrow \opH^{2p^{i-1}}(G,k),
\end{equation}
and hence a homomorphism of graded $k$-algebras
\begin{equation} \label{eq:morphismgradedalgebras}
S^\bullet((\gltr)^*(2p^{i-1})) \rightarrow \Hbul(G,k).
\end{equation}
Here $S^\bullet((\gltr)^*(2p^{i-1}))$ denotes the symmetric algebra on $(\gltr)^*$, considered as a graded algebra with $(\gltr)^*$ concentrated in degree $2p^{i-1}$. Note that if $G = (GL_2)_r$, then \eqref{eq:linearmap} and \eqref{eq:morphismgradedalgebras} are $GL_2$-equivariant homomorphisms. Taking the product of the maps in \eqref{eq:morphismgradedalgebras}, one obtains a homomorphism of graded algebras $\bigotimes_{i=1}^r S^\bullet((\gltr)^*(2p^{i-1})) \rightarrow \Hbul(G,k)$. The main theorem of this section is now:

\begin{theorem} \label{theorem:GL2fg}
Let $G \subset (GL_2)_r$ be an infinitesimal group scheme over $k$. Let $C$ be a commutative $k$-algebra, considered as a trivial $G$-module, and let $M$ be a noetherian $C$-module on which $G$ acts by $C$-linear transformations. Then $\Hbul(G,M)$ is noetherian over $\bigotimes_{i=1}^r S^\bullet((\gltr)^*(2p^{i-1})) \otimes C$. In particular, $\Hbul(G,k)$ is a finite module over the polynomial algebra $\bigotimes_{i=1}^r S^\bullet((\gltr)^*(2p^{i-1}))$, and hence is a finitely-generated noetherian $k$-algebra.
\end{theorem}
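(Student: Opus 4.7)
The plan is to emulate the Friedlander--Suslin induction on $r$ from \cite[\S1.5]{Friedlander:1997}, with the universal classes $e_i^{(r-i)}$ of Theorem \ref{theorem:extclassesGL2} substituting for their universal classes for $GL_n$. The engine of the induction is the LHS spectral sequence
\[
E_2^{i,j} = \opH^i(G/G_1,\opH^j(G_1,M)) \Rightarrow \opH^{i+j}(G,M),
\]
coupled with the observation that, after identifying $(GL_2)_r/(GL_2)_1$ with $(GL_2)_{r-1}$ via Frobenius, both $G_1 \subset (GL_2)_1$ and $G/G_1 \subset (GL_2)_{r-1}$ are themselves infinitesimal subgroup schemes to which the induction hypothesis applies. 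An Evens-type finite-generation criterion for spectral sequences of algebras (cf.\ \cite[Lemma 1.6]{Friedlander:1997}) then yields noetherianness of $\opH^\bullet(G,M)$ over $\bigotimes_{i=1}^r S^\bullet((\gltr)^*(2p^{i-1})) \otimes C$, once enough permanent cycles are exhibited on the $E_\infty$-page.

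For the base case $r = 1$, Theorem \ref{theorem:extclassesGL2} says that $e_1 \in \opH^2(GL_2,\gl_2^{(1)})$ restricts non-trivially to $\opH^2((GL_2)_1,\gl_2^{(1)})$. Since $(GL_2)_1$ acts trivially on $\gl_2^{(1)}$, this restriction is a $GL_2$-equivariant linear map $(\gl_2^{(1)})^* \to \opH^2((GL_2)_1,k)$, which extends to a homomorphism $S^\bullet((\gl_2^{(1)})^*(2)) \to \opH^\bullet((GL_2)_1,k)$. The standard Friedlander--Parshall argument identifying the reduced cohomology of $(GL_2)_1$ with the coordinate ring of the restricted nullcone then makes $\opH^\bullet((GL_2)_1,M)$ noetherian over the image, and a further restriction handles general $G \subset (GL_2)_1$. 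The inductive step proceeds as in \cite{Friedlander:1997}: the inductive hypothesis applied to $G/G_1 \subset (GL_2)_{r-1}$ with coefficient module $\opH^\bullet(G_1,M)$ makes the $E_2$-page noetherian over the appropriate tensor product of polynomial algebras, while the restrictions to $G$ of the classes $e_i^{(r-i)}$ contribute the necessary permanent cycles in the required degrees via Theorem \ref{theorem:extclassesGL2}.

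The genuine obstacle is the case $p = 2$. Friedlander and Suslin avoid the case $p \mid n$ by embedding $G \hookrightarrow (GL_{n'})_r$ with $n' \not\equiv 0 \pmod p$ and invoking the result for the larger group; since universal classes are available here only for $GL_2$, that maneuver is closed off. The plan for handling $p = 2$ directly is to leverage Proposition \ref{proposition:resGLtoSL} and Remark \ref{remark:resGLn1toSLn1}, which transport cohomological statements with coefficients in $\gltr$ between $GL_2$ and $SL_2$ along the central decomposition $GL_2 = SL_2 \cdot Z$. The delicate verification will be that, at $p = 2$, the homomorphism $\bigotimes_{i=1}^r S^\bullet((\gltr)^*(2p^{i-1})) \to \opH^\bullet(G,k)$ built from the $e_i^{(r-i)}$ still has noetherian image, despite the absence of the $\gl_2 \cong \fsl_2 \oplus k$ splitting that is available in odd characteristic and that Friedlander--Suslin implicitly exploit in the reduction to $\fsl_n$-coefficients.
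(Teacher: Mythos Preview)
Your overall framework matches the paper's: induction on $r$ via the LHS spectral sequence and the Evens-type criterion from \cite[Lemma~1.6]{Friedlander:1997}, following Friedlander--Suslin, is exactly what the paper does, and you correctly flag $p=2$ as the case requiring new input. The gap is in how you propose to close that case.

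The crux of the base case $r=1$---and this is needed for \emph{all} primes, not only $p=2$---is to verify that the linear map $(\gl_2^{(1)})^* \to \opH^2((GL_2)_1,k)$ built from $e_1$ coincides, up to a nonzero scalar, with the horizontal edge map of the May spectral sequence. Without this identification, the ``standard Friedlander--Parshall argument'' you invoke does not apply: knowing merely that the map is nonzero does not guarantee its image is large enough to make cohomology noetherian over it. Friedlander and Suslin establish the analogous identification (their Lemma~1.7) by an argument requiring $n \not\equiv 0 \pmod p$, which is precisely why they sidestep $p \mid n$ by enlarging $n$.

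The paper's fix is not to transport the question to $SL_2$ via Proposition~\ref{proposition:resGLtoSL}, nor to salvage any $\gl_2 \cong \fsl_2 \oplus k$ decomposition. It is Lemma~\ref{lemma:morphismcoincides}: since
\[
\Hom_{GL_2}\bigl((\gltr)^*,\opH^{2p^{r-1}}((GL_2)_1,k)\bigr) \cong \opH^{2p^{r-1}}((GL_2)_1,\gltr)^{GL_2}
\]
is one-dimensional by the uniqueness statement in Theorem~\ref{theorem:extclassesGL2}, any two nonzero $GL_2$-equivariant maps $(\gltr)^* \to \opH^{2p^{r-1}}((GL_2)_1,k)$---in particular the one coming from $e_r$ and the May-spectral-sequence composition---must agree up to scalar. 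This works uniformly in $p$, and once it is in hand the rest of \cite[\S1]{Friedlander:1997} goes through verbatim. Your proposed $SL_2$-transport does not address the identification and would face the same issue on the $SL_2$ side; Proposition~\ref{proposition:resGLtoSL} and Remark~\ref{remark:resGLn1toSLn1} enter only upstream, in the proof of Theorem~\ref{theorem:extclassesGL2} itself.
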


The proof of the theorem is by induction on $r$. For $r=1$, the key step is to show that the linear map \eqref{eq:linearmap} coincides, up to a nonzero scalar factor, with the composition of maps
\begin{equation} \label{eq:compositionofmaps}
(\gl_2^{(1)})^* \twoheadrightarrow (\g^{(1)})^* \rightarrow \opH^2(G,k),
\end{equation}
Here $\g = \Lie(G)$, the arrow $(\gl_2^{(1)})^* \twoheadrightarrow (\g^{(1)})^*$ is induced by duality from the inclusion of Lie algebras $\g \hookrightarrow \gl_2$, and the map $(\g^{(1)})^* \rightarrow \opH^2(G,k)$ is the horizontal edge map of the May spectral sequence 
\begin{equation} \label{eq:Mayspecseq}
E_2^{s,t} = S^{s/2}((\g^{(1)})^*) \otimes \opH^t(\g,k) \Rightarrow \opH^{s+t}(G,k);
\end{equation}
cf.\ \cite[p.\ 217--218]{Friedlander:1997}. By the naturality of the May spectral sequence, it suffices to check that the maps coincide in the special case $G = (GL_2)_1$. In this case, \eqref{eq:linearmap} and \eqref{eq:compositionofmaps} are both $GL_n$-equivariant homomorphisms. To check that \eqref{eq:linearmap} and \eqref{eq:compositionofmaps} coincide in the special case $G = (GL_2)_1$, we have the following analogue of \cite[Lemma 1.7]{Friedlander:1997} that is valid for fields of arbitrary positive characteristic:

\begin{lemma} \label{lemma:morphismcoincides}
Let $r \geq 1$, and let $(\gltr)^* \rightarrow \opH^{2p^{r-1}}((GL_2)_1,k)$ be a nonzero $GL_2$-equivariant homomorphism. Then this homomorphism coincides, up to a nonzero scalar factor, with the composition
\begin{equation} \label{eq:composition}
(\gltr)^* \rightarrow S^{p^{r-1}}((\gl_2^{(1)})^*) \rightarrow \opH^{2p^{r-1}}((GL_2)_1,k),
\end{equation}
where the first arrow raises elements to the $p^{r-1}$ power, and the second arrow is the horizontal edge map of the May spectral sequence.
\end{lemma}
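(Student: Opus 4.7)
The plan is to identify the space of $GL_2$-equivariant homomorphisms $(\gltr)^* \to \opH^{2p^{r-1}}((GL_2)_1,k)$ with a piece of cohomology already controlled by Theorem~\ref{theorem:extclassesGL2}, deduce that this Hom space is one-dimensional, and then verify that the composition \eqref{eq:composition} is itself a nonzero element of it.

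First, I would exploit the fact that $(GL_2)_1$ acts trivially on the $r$-th Frobenius twist $\gltr$ to produce the $GL_2$-equivariant identification $\opH^{2p^{r-1}}((GL_2)_1, \gltr) \cong \opH^{2p^{r-1}}((GL_2)_1, k) \otimes \gltr$. Since $\gltr$ is finite-dimensional, tensor-Hom duality then yields
\[
\Hom_{GL_2}((\gltr)^*, \opH^{2p^{r-1}}((GL_2)_1, k)) \cong \opH^{2p^{r-1}}((GL_2)_1, \gltr)^{GL_2}.
\]
By Theorem~\ref{theorem:extclassesGL2}, the right-hand side is one-dimensional, so any nonzero $GL_2$-equivariant homomorphism $(\gltr)^* \to \opH^{2p^{r-1}}((GL_2)_1, k)$ spans the Hom space.

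Next I would check that the composition \eqref{eq:composition} is itself a nonzero $GL_2$-equivariant homomorphism. The $p^{r-1}$-power map is $GL_2$-equivariant by the standard observation that raising to a $p$-th power in a symmetric algebra becomes $k$-linear and equivariant once the source is reinterpreted via the Frobenius twist; the edge map is $GL_2$-equivariant by naturality of the May spectral sequence~\eqref{eq:Mayspecseq}. For nonvanishing, the case $r=1$ reduces to the well-known fact that the edge map $(\gl_2^{(1)})^* \to \opH^2((GL_2)_1,k)$ is injective, and for general $r$ one checks that $p^{r-1}$-th powers of these degree-$2$ classes remain nonzero in $\opH^{2p^{r-1}}((GL_2)_1,k)$ by appealing to the explicit description of the polynomial subalgebra of $\opH^\bullet((GL_2)_1,k)$ produced by the horizontal edge.

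With both pieces established, the conclusion is immediate: the given nonzero homomorphism and the nonzero composition both lie in a one-dimensional $k$-vector space, and must therefore agree up to a nonzero scalar. I expect the main obstacle to be the nonvanishing assertion, particularly in characteristic~$2$, where the multiplicative structure of $\opH^\bullet((GL_2)_1,k)$ differs from the odd-characteristic case and needs separate attention; once the $r=1$ injectivity of the edge map and the absence of nilpotence for the relevant polynomial generators are in hand, the rest is one-dimensional linear algebra.
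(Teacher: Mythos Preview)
Your proposal is correct and follows essentially the same approach as the paper: identify the Hom space with $\opH^{2p^{r-1}}((GL_2)_1,\gltr)^{GL_2}$, invoke Theorem~\ref{theorem:extclassesGL2} for one-dimensionality, and conclude by linear algebra. You are in fact more careful than the paper in explicitly addressing the nonvanishing of the composition~\eqref{eq:composition}, which the paper's own proof leaves implicit.
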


\begin{proof}
The map \eqref{eq:composition} is evidently a $GL_2$-module homomorphism. On the other hand,
\[
\Hom_{GL_2}((\gltr)^*,\opH^{2p^{r-1}}((GL_2)_1,k)) \cong \opH^{2p^{r-1}}((GL_2)_1,\gltr)^{GL_2}
\]
is one-dimensional by Theorem \ref{theorem:extclassesGL2}, so any two nonzero $GL_2$-module homomorphisms $(\gltr)^* \rightarrow \opH^{2p^{r-1}}((GL_2)_1,k)$ must coincide up to a scalar factor.
\end{proof}

With Lemma \ref{lemma:morphismcoincides} established, the case $r=1$ of Theorem \ref{theorem:GL2fg} is proven exactly as in \cite[p.~217--218]{Friedlander:1997}, and then the general case of Theorem \ref{theorem:GL2fg} is handled exactly as in \cite[p.~219--220]{Friedlander:1997}. Repeating the arguments of \cite[p.~220--221]{Friedlander:1997}, the reader can also obtain a new proof that if $G$ is a finite group scheme such that for some finite field extension $k'$ of $k$, the connected component of $G_{k'}$ containing the identity element is an infinitesimal subgroup scheme of $GL_2$, then the cohomology ring $\Hbul(G,k)$ is a finitely-generated $k$-algebra, and $\Hbul(G,M)$ is a finitely-generated $\Hbul(G,k)$-module for any rational $G$-module $M$.

\section*{Acknowledgements}

This work was completed as a result of conversations with Alison Parker at the American Institute of Mathematics workshop on Cohomology Bounds and Growth Rates in June 2012. The author thanks the organizers of the workshop for inviting him to participate.


\begin{thebibliography}{1}

\bibitem{Doty:2005}
S.~Doty and A.~Henke, \emph{Decomposition of tensor products of modular
  irreducibles for {${\rm SL}\sb 2$}}, Q. J. Math. \textbf{56} (2005), no.~2,
  189--207.

\bibitem{Friedlander:1997}
E.~M. Friedlander and A.~Suslin, \href{http://dx.doi.org/10.1007/s002220050119}
  {\emph{Cohomology of finite group schemes over a field}}, Invent. Math.
  \textbf{127} (1997), no.~2, 209--270.

\bibitem{Jantzen:2003}
J.~C. Jantzen, \emph{Representations of algebraic groups}, second ed.,
  Mathematical Surveys and Monographs, vol. 107, American Mathematical Society,
  Providence, RI, 2003.

\bibitem{Parker:2007}
A.~E. Parker, \href{http://dx.doi.org/10.1016/j.aim.2006.05.015} {\emph{Higher
  extensions between modules for {$\rm SL_2$}}}, Adv. Math. \textbf{209}
  (2007), no.~1, 381--405.

\bibitem{Kallen:2004}
W.~van~der Kallen, \emph{Cohomology with {G}rosshans graded coefficients},
  Invariant theory in all characteristics, CRM Proc. Lecture Notes, vol.~35,
  Amer. Math. Soc., Providence, RI, 2004, pp.~127--138.

\end{thebibliography}

\providecommand{\bysame}{\leavevmode\hbox to3em{\hrulefill}\thinspace}
\providecommand{\MR}{\relax\ifhmode\unskip\space\fi MR }
\providecommand{\MRhref}[2]{%
  \href{http://www.ams.org/mathscinet-getitem?mr=#1}{#2}
}
\providecommand{\href}[2]{#2}

\end{document}